\newtheorem{theorem}{Theorem}
\newtheorem{lemma}[theorem]{Lemma}
\newtheorem{proposition}[theorem]{Proposition}
\theoremstyle{definition}
\newtheorem{remark}[theorem]{Remark}
\newtheorem{definition}[theorem]{Definition}
\numberwithin{equation}{section}
\def\cN{{\mathcal N}}
\def\cP{{\mathcal P}}
\def\cZ{{\mathcal Z}}
\def\cT{{\mathcal T}}
\def\cC{{\mathcal C}}
\def\cH{{\mathcal H}}
\def\cF{{\mathcal F}}
\def\simp{{\textup{simp}}}
\def\nsmp{{\textup{nonsimp}}}
\def\ZZ{{\mathbb Z}}
\def\RR{{\mathbb R}}
\def\QQ{{\mathbb Q}}
\def\rank{\operatorname{rank}}
\def\inte{\operatorname{int}}
\def\para{\operatorname{par}}
\def\U{\operatorname{U}}
\def\gp{\operatorname{gp}}
\def\Ker{\operatorname{Ker}}
\def\Hilb{\operatorname{Hilb}}
\def\tdeg{\operatorname{tdeg}}
\def\bt{{\textbf{\textit t}}}
\let\dirsum=\oplus
\def\vertex{\pscircle[fillstyle=solid,fillcolor=black]{0.07}}
\definecolor{light}{gray}{0.9}
\definecolor{medium}{gray}{0.8}
\let\epsilon=\varepsilon
\let\tilde=\widetilde
\begin{document}
\title[Normaliz]{Normaliz: algorithms for affine monoids\\ and rational cones}

\author{Winfried Bruns and Bogdan Ichim}
\address{Universit\"at Osnabr\"uck, FB Mathematik/Informatik, 49069
Osnabr\"uck, Germany} \email{wbruns@uos.de, csoeger@uos.de}

\address{Institute of Mathematics, C.P. 1-764,
70700 Bucharest, Romania} \email{bogdan.ichim@imar.ro}

\begin{abstract}
Normaliz is a program for the computation of Hilbert bases of
rational cones and the normalizations of affine monoids. It may also
be used for solving diophantine linear systems. In this paper we
present the algorithms implemented in the program.
\end{abstract}

\maketitle

\section{Introduction}\label{intro}
\let\thefootnote\relax\footnotetext{Acknowledgement: The second author was partially supported by CNCSIS grant RP-1 no. 7/01.07.2009 during the preparation of this work.}

The program Normaliz  got its name from the first task for
which it was designed: the computation of normalizations of
affine monoids (or semigroups in other terminology).  This task
amounts to the computation of the Hilbert basis of the monoid
of lattice points in a rational cone $C$ with given generating
system $x_1,\dots,x_n$ (see for Section \ref{affmon} for
terminology and \cite{BG} for mathematical background). Such
cones can be described equivalently by homogeneous linear
diophantine equations and inequalities, and the computation of
the normalization is equivalent to solving such systems.

The mathematical aspects of the first implementation of Normaliz
have been documented in \cite{BK}. In this paper we present the
algorithms that have been added or modified in version 2.0 and
later. Further extensions, for example parallelization of time
critical steps, are still experimental; they will be presented in
\cite{BHIKS}.

As any other program that computes Hilbert bases, Normaliz
first determines a system of generators of the monoid. Section
\ref{reduction} describes Normaliz' approach for the reduction
of the system of generators to a Hilbert basis---often (but not
always) the most time consuming part of the computation.
Section \ref{dual} contains our implementation of the
Fourier--Motzkin elimination, which is tuned for obtaining best
results in the case when most of the facets are simplicial.
(Fourier--Motzkin elimination computes the convex hull of a
finite set of points, or, in homogenized form, the support
hyperplanes of a finitely generated cone.) We need this variant
for the new algorithm by which $h$-vector and Hilbert
polynomial are computed. It is based on line shellings and will
be presented in Section \ref{hviash}. Finally, our
implementation of Pottier's algorithm \cite{Pot} is presented
in Section \ref{dual_alg}. In our interpretation, this ``dual''
algorithm is based on a representation of the cone as an
intersection of halfspaces, whereas the ``primal'' algorithm of
Normaliz starts from a system of generators.

The first version of Normaliz was a C program created by
Winfried Bruns and Robert Koch in 1997--1998 and extended in
2003 by Witold Jarnicki. Version 2.0 (2007--2008) was
completely rewritten in C++ by Bogdan Ichim. Pottier's
algorithm for solving systems of inequalities and equations was
added in version 2.1. Christof Söger enhanced the user
interface in version 2.2, the currently public version. The
distribution of Normaliz \cite{Nmz} contains a Singular library
and a Macaulay2 package; for the latter, written by Gesa Kämpf,
see \cite{BKae}. Andreas Paffenholz provided a polymake
interface to Normaliz \cite{JMP}.

We wish to thank all colleagues who have contributed to the
development of of Normaliz.

\section{Affine monoids and their Hilbert bases}\label{affmon}

We use the terminology introduced as in \cite{BG}, but for the
convenience of the reader we recall some important notions. A
\emph{rational cone} $C\subset \RR^d$ is the intersection of
finitely many linear halfspaces
$H_\lambda^+=\{x\in\RR^d:\lambda(x)\ge0\}$ where $\lambda$ is a
linear form with rational coefficients (with respect to the standard
basis of $\RR^d$). By the theorem of Minkowski--Weyl (for example,
see \cite[1.15]{BG}), we can require equivalently that $C$ is of
type $\RR_+x_1+\dots+\RR_+x_n$ with $x_i\in\QQ^d$, $i=1,\dots,n$. In
this case, $x_1,\ldots,x_n$ form a \emph{system of generators} for
$C$. If $C$ can be generated by a linearly independent set of
generators, we say that $C$ is a \emph{simplicial} cone. If $\dim
C=d$, then the halfspaces in an irredundant representation of $C$ as
an intersection of halfspaces are uniquely determined, and the
corresponding linear forms $\lambda_i$ are called \emph{support
forms} of $C$, after they have been further specialized such that
$\lambda_i(\gp(M))=\ZZ$. If $\gp(M)=\ZZ^d$, the last condition
amounts to the requirement that the $\lambda_i$ have coprime
integral coefficients. (Such linear forms are called
\emph{primitive}.) In the following all cones are rational, and we
omit this attribute accordingly. A cone is \emph{pointed} if
$x,-x\in C$ implies $x=0$.

An \emph{affine monoid} $M$ is finitely generated and (isomorphic
to) a submonoid of a lattice $\ZZ^d$. By $\gp(M)$ we denote the
subgroup generated by $M$, and by $\rank M$ its rank. The support
forms $\sigma_1,\dots,\sigma_s$ of the cone $\RR_+M\subset \RR M$
are called the \emph{support forms of $M$}. They define the
\emph{standard map}
$$
\sigma:M\to\ZZ_+^s,\qquad
\sigma(x)=\bigl(\sigma_1(x),\dots,\sigma_s(x)\bigr).
$$
We introduce the \emph{total degree} $\tdeg x$ by $\tdeg
x=\sigma_1(x)+\dots+\sigma_s(x)$. (In \cite{BG} the total degree is
denoted $\tau$.)

The \emph{unit group} $\U(M)$ consists of the elements $x\in M$ for
which $-x\in M$ as well. It is not hard to see that $x\in\U(M)$ if
and only if $\sigma(x)=0$ (see \cite[2.14]{BG}), in other words, if
and only if $\tdeg x=0$. (However, in general $\tdeg x=\tdeg y$ does
\emph{not} imply $x-y\in\U(M)$ since $x-y$ need not belong to $M$.)
One calls $M$ \emph{positive} if $\U(M)=0$.

An element $x\in M$ is \emph{irreducible} if $x\notin \U(M)$ and a
representation $x=y+z$ with $y,z\in M$ is only possible with
$y\in\U(M)$ or $z\in U(M)$.

In the next definition we extend the terminology of \cite{BG}
slightly.

\begin{definition}
Let $M$ be a (not necessarily positive) affine monoid. A subset $H$
of $M$ is a \emph{system of generators modulo $\U(M)$} if
$M=\ZZ_+H+\U(M)$, and $H$ is a \emph{Hilbert basis} if it is minimal
with respect to this property.
\end{definition}

A Hilbert basis is necessarily finite since $M$ has a finite system
of generators. Moreover, every system of generators modulo $\U(M)$
contains a Hilbert basis. Often we will use the following criterion
(see \cite[2.14]{BG}).

\begin{proposition}\label{HilbCrit}
$H\subset M$ is a Hilbert basis if and only if it is a system of
representatives of the nonzero residue classes of the irreducible
elements modulo $\U(M)$.
\end{proposition}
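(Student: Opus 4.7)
\medskip

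\noindent\textbf{Plan of proof.} The proof is an iff, and the natural tool throughout is the total degree $\tdeg$, since $\tdeg x = 0$ precisely when $x \in \U(M)$, and $\tdeg$ is additive on sums in $M$. I will argue the two directions separately.

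For the forward direction, assume $H$ is a Hilbert basis. I would first observe that $H \cap \U(M) = \emptyset$: any unit in $H$ can be absorbed into the $\U(M)$-summand of any expression $\sum a_i h_i + u$, so it could be deleted without losing the generating property, contradicting minimality. Next I would show that every $h \in H$ is irreducible: if $h = y+z$ with $y,z \in M \setminus \U(M)$, then $\tdeg y, \tdeg z > 0$ and $\tdeg y + \tdeg z = \tdeg h$. Expressing $y$ and $z$ as $\sum a_i h_i + u$ and $\sum b_i h_i' + v$ via $H$, every $h_i$ that actually occurs has $\tdeg h_i \le \tdeg y < \tdeg h$ (and similarly for $h_i'$), since the elements of $H$ have strictly positive total degree by the previous step. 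Hence $h$ lies in $\ZZ_+(H \setminus \{h\}) + \U(M)$, contradicting minimality. Two distinct elements $h,h' \in H$ cannot be congruent mod $\U(M)$, for otherwise $h = h' + u$ lets us drop $h$ from $H$. Finally, if $x \in M$ is irreducible, write $x = \sum a_i h_i + u$ and pick any $h_j$ with $a_j > 0$; then $x - h_j = (a_j-1)h_j + \sum_{i\ne j} a_i h_i + u \in M$, and irreducibility of $x$ together with $h_j \notin \U(M)$ forces $x - h_j \in \U(M)$, so $x \equiv h_j \pmod{\U(M)}$.

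For the converse, assume $H$ is a system of representatives of the nonzero residue classes of irreducible elements modulo $\U(M)$. To see $M = \ZZ_+H + \U(M)$, I would induct on $\tdeg x$. If $\tdeg x = 0$ then $x \in \U(M)$. Otherwise, if $x$ is irreducible, then by hypothesis $x \equiv h \pmod{\U(M)}$ for some $h \in H$, giving $x = h + u \in \ZZ_+H + \U(M)$. If $x$ is reducible, write $x = y + z$ with $y,z \in M \setminus \U(M)$, so $\tdeg y, \tdeg z < \tdeg x$, and the induction hypothesis applies to $y$ and $z$. For minimality, suppose $H' \subsetneq H$ were still a generating set modulo $\U(M)$, and pick $h \in H \setminus H'$. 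Since $h$ is irreducible (it is a representative of an irreducible class, and irreducibility is a property of the class modulo $\U(M)$), the same argument as in the last step of the forward direction, applied to the representation of $h$ via $H'$, yields $h \equiv h'_j \pmod{\U(M)}$ for some $h'_j \in H' \subset H$ with $h'_j \ne h$, contradicting the uniqueness of class representatives in $H$.

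\noindent\textbf{Main obstacle.} The only genuinely delicate point is the total-degree bookkeeping in the forward direction, where one must rule out the possibility that $h$ itself appears on the right-hand side of the expressions for $y$ and $z$. This is what makes the preliminary observation $H \cap \U(M) = \emptyset$ important: it guarantees $\tdeg h_i > 0$ for every $h_i \in H$, so that the strict inequality $\tdeg y < \tdeg h$ really does exclude $h$. A small secondary point in the converse is to verify that irreducibility is well-defined on classes modulo $\U(M)$, which is immediate from the definition of an irreducible element.
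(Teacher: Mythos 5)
Your proof is correct and complete: both directions check out, the total-degree induction is sound (using that $\tdeg$ is additive, $\ZZ_+$-valued on $M$, and vanishes exactly on $\U(M)$), and you correctly handle the two subtle points (excluding $h$ from the expressions for $y$ and $z$ via the strict inequality $\tdeg y<\tdeg h$, and the invariance of irreducibility under translation by units). The paper itself gives no proof of this proposition -- it simply cites \cite[2.14]{BG} -- and your argument is essentially the standard one found there, so there is nothing to reconcile.
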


The Hilbert basis of a positive affine monoid is uniquely determined
and denoted by $\Hilb(M)$.

Suppose $N$ is an overmonoid of $M$. Then we call $y\in N$
\emph{integral} over $M$ if $ky\in M$ for some $k\in\ZZ$, $k>0$. The
set of elements of $N$ that are integral over $M$ form the
\emph{integral closure} $\widehat M_N$ of $M$ in $N$; it is itself a
monoid. The \emph{normalization} $\bar M$ of $M$ is its integral
closure in $\gp(M)$, and if $M=\bar M$, $M$ is called \emph{normal}.

If $M$ is normal, the case in which we are mainly interested, then
$M$ splits in the form $\U(M)\dirsum\sigma(M)$ (see \cite[2.26]{BG})
and we can state:

\begin{proposition}\label{HilbNorm}
Let $M$ be a normal affine monoid with standard map
$\sigma:M\to\ZZ_+^s$. Then $H\subset M$ is a Hilbert basis of $M$ if
and only if $\sigma$ maps $H$ bijectively onto a Hilbert basis of
$\sigma(M)$.
\end{proposition}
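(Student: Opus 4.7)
My plan is to reduce to Proposition \ref{HilbCrit} by exploiting the splitting $M=\U(M)\dirsum\sigma(M)$ recalled from \cite[2.26]{BG}. The image monoid $\sigma(M)\subset\ZZ_+^s$ is positive, so a Hilbert basis of $\sigma(M)$ is exactly the set of nonzero irreducibles of $\sigma(M)$. Similarly, by Proposition \ref{HilbCrit} a Hilbert basis of $M$ is a system of representatives for the nonzero residue classes of irreducibles modulo $\U(M)$. Thus the whole statement will follow once I show that $\sigma$ induces a bijection between the nonzero residue classes of irreducibles of $M$ modulo $\U(M)$ and the set of nonzero irreducibles of $\sigma(M)$.

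First I would note that for $x,y\in M$ one has $\sigma(x)=\sigma(y)$ if and only if $x-y\in\U(M)$: writing each of $x$ and $y$ according to the decomposition $M=\U(M)\dirsum\sigma(M)$ makes the equivalence immediate, using also that $\U(M)=\Ker\sigma$. Hence $\sigma$ factors through an injection $M/\U(M)\hookrightarrow\sigma(M)$ which is in fact surjective, sending $0\mapsto 0$. In particular, for any set $H\subset M$ that maps injectively to its residue classes modulo $\U(M)$, the restriction of $\sigma$ to $H$ is injective.

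The main step is to prove that $x\in M\setminus\U(M)$ is irreducible in $M$ if and only if $\sigma(x)\in\sigma(M)\setminus\{0\}$ is irreducible in $\sigma(M)$. One direction is easy: if $x=y+z$ with $y,z\notin\U(M)$, then $\sigma(x)=\sigma(y)+\sigma(z)$ with both summands nonzero. For the converse, suppose $\sigma(x)=a+b$ with $a,b\in\sigma(M)\setminus\{0\}$; lift $a=\sigma(y)$ and $b=\sigma(z)$ with $y,z\in M$, which must lie outside $\U(M)$ by the previous paragraph. Since $\sigma(x-y-z)=0$, the element $u:=x-y-z$ belongs to $\U(M)$, so $x=y+(z+u)$ is a decomposition in $M$ with both summands outside $\U(M)$ (noting $\sigma(z+u)=\sigma(z)=b\ne 0$). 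This shows irreducibility transfers both ways.

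Combining the two ingredients, any system of representatives of the nonzero irreducible classes of $M$ modulo $\U(M)$ is mapped by $\sigma$ bijectively onto the set of nonzero irreducibles of $\sigma(M)$, and conversely any subset $H\subset M$ mapped bijectively onto a Hilbert basis of $\sigma(M)$ is such a system of representatives. The equivalence in the proposition then follows directly from Proposition \ref{HilbCrit}. I do not foresee a serious obstacle; the only subtlety is the lifting argument in the converse of the irreducibility equivalence, which is handled cleanly by the direct sum decomposition $M=\U(M)\dirsum\sigma(M)$.
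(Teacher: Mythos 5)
Your proof is correct and takes the route the paper itself intends: the paper states Proposition \ref{HilbNorm} without proof, presenting it as an immediate consequence of the splitting $M=\U(M)\dirsum\sigma(M)$ together with Proposition \ref{HilbCrit}, and your argument simply supplies the missing details (identifying residue classes modulo $\U(M)$ with fibers of $\sigma$, and transferring irreducibility back and forth via the splitting). Both the injectivity step and the lifting step in your irreducibility equivalence are handled correctly, so there is nothing to fix.
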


It is a crucial fact that integral closures of affine monoids have a
geometric description (see \cite[2.22]{BG}):

\begin{theorem}
Let $M\subset N$ be submonoids of $\QQ^d$, and $C=\RR_+M$.
\begin{enumerate}
\item Then $\widehat M_N=C\cap N$.
\item If $M$ and $N$ are affine monoids, then $\widehat M_N$ is
affine, too.
\end{enumerate}
\end{theorem}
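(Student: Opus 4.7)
The plan is to handle the two parts separately, with (1) reducing to a statement about rational solutions of linear systems and (2) reducing to Gordan's lemma applied after a change of coordinates.

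For part (1), I would prove the two inclusions of $\widehat M_N=C\cap N$ independently. The inclusion $\widehat M_N\subseteq C\cap N$ is immediate from the definitions: if $y\in\widehat M_N$ then $y\in N$ and $ky\in M\subseteq C$ for some $k>0$, and since $C$ is closed under positive scaling, $y=(1/k)(ky)\in C$. For the reverse inclusion, given $y\in C\cap N$ I would choose finitely many $x_1,\dots,x_n\in M$ and $r_i\in\RR_+$ with $y=\sum r_i x_i$. The set of tuples $(s_i)\in\RR_+^n$ satisfying $\sum s_i x_i=y$ is a nonempty polyhedron defined by linear equations and inequalities with rational coefficients, because $x_i$ and $y$ have rational coordinates; hence it contains a rational point $(r_i')\in\QQ_+^n$. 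Clearing a common denominator produces a positive integer $k$ with $kr_i'\in\ZZ_+$, so $ky=\sum(kr_i')x_i\in M$, giving $y\in\widehat M_N$.

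For part (2), I would invoke Gordan's lemma. Since $M$ is affine, $C$ is a finitely generated rational cone, and Minkowski--Weyl yields rational linear forms $\sigma_1,\dots,\sigma_s$ with $C=\{x:\sigma_i(x)\ge 0,\ i=1,\dots,s\}$. Since $N$ is affine, fix generators $n_1,\dots,n_m$ of $N$ and consider the surjective monoid homomorphism $\phi\colon\ZZ_+^m\to N$, $a\mapsto\sum_j a_j n_j$. Pulling back the defining inequalities,
\[
\phi^{-1}(\widehat M_N)=\{a\in\ZZ_+^m:\sigma_i(\phi(a))\ge 0,\ i=1,\dots,s\}
\]
is the set of lattice points in $\RR^m$ lying in the rational polyhedral cone cut out by $\RR_+^m$ and the halfspaces $\sigma_i\circ\phi\ge 0$. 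By Gordan's lemma this monoid is finitely generated; applying $\phi$ yields a finite generating set of $\widehat M_N$. Because $\widehat M_N\subseteq\QQ^d$, its associated group is finitely generated and torsion-free, hence free abelian, so $\widehat M_N$ is affine in the required sense.

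The main obstacle is the rational-approximation step in the $(\supseteq)$ half of part (1): one must replace a real conic combination of rational vectors by a rational one. This rests on the standard fact that a consistent system of linear equations and inequalities with rational coefficients admits a rational solution, equivalently, that a nonempty rational polyhedron contains rational points. Once this fact is in hand, everything else in both parts is either immediate from the definitions or a direct application of Gordan's lemma.
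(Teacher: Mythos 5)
The paper gives no proof of this theorem: it is quoted from \cite[2.22]{BG}, with only the remark that part (2) is an extended form of Gordan's lemma. Your argument is correct and is essentially the standard proof of that result. The two points that carry the weight are exactly the ones you isolate: for (1), that a nonempty polyhedron cut out by rational equations and inequalities contains a rational point, so that membership of a rational vector in $\RR_+M$ is certified by a rational conic combination whose denominators can be cleared; for (2), that pulling the halfspace description of $C$ back along the surjection $\ZZ_+^m\to N$ produces the lattice points of a rational cone inside $\RR_+^m$, to which the classical Gordan's lemma applies, after which $\widehat M_N$ is the image of a finitely generated monoid and embeds in a lattice because its group of differences is finitely generated and torsion-free. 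There is no circularity in using the classical lattice-point form of Gordan's lemma to derive the extended version stated here.
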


The second statement of the theorem is (an extended version of)
\emph{Gordan's lemma}.

The program Normaliz computes Hilbert bases of monoids of type
$C\cap L$ where $C$ is a pointed rational cone specified either (i)
by a system $x_1,\dots,x_n\in \ZZ^d$ or (ii) a system
$\sigma_1,\dots,\sigma_s\in(\RR^d)^*$ of integral linear forms, and
$L$ is a lattice that can be chosen to be either $\ZZ^d$ or, in case
(i), $\ZZ x_1+\dots+\ZZ x_n$. We will simply say that Normaliz
computes Hilbert bases of rational cones. If $C$ is pointed and
their is no ambiguity about the lattice $L$, then we simply write
$\Hilb(C)$ for $\Hilb(C\cap L)$.

Once a system of generators of $C$ is known (either from the input
data or as a result of a previous computation), Normaliz reduces
this computation to the \emph{full-dimensional} case in which $\dim
C=\rank L$ and, and introduces coordinates for the identification
$L=\ZZ^{\rank L}$. The necessary coordinate transformations are
discussed in \cite[Section 2]{BK}.

\section{Reduction}\label{reduction}

All algorithms that compute Hilbert bases of rational cones cannot
avoid to first produce a system of generators that is nonminimal in
general. In a second, perhaps intertwined, step the system of
generators is shrunk to a Hilbert basis. This approach is based on
the the following proposition. Let us say that $y\in M$
\emph{reduces} $x\in M$ if $y\notin \U(M)$, $x\neq y$, and $x-y\in
M$.

\begin{proposition}
Let $M$ be an affine monoid (not necessarily positive or normal),
$E\subset M$ a system of generators modulo $\U(M)$, and $x\in E$. If
$x$ is reduced by some $y\in E$, then $E\setminus\{x\}$ is again a
system of generators modulo $\U(M)$.
\end{proposition}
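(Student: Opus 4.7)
The plan is to reduce the claim to showing that $x$ itself lies in $\ZZ_+(E\setminus\{x\})+\U(M)$. Once I have that, any element $m\in M$ can be written as $m=\sum_{e\in E}a_ee+u$ with $u\in\U(M)$ by the hypothesis on $E$; substituting the new expression for $x$ into the summand $a_xx$ (and collecting unit terms, which is allowed because $\U(M)$ is a group) removes $x$ from the representation. So the whole proposition reduces to proving that single assertion about $x$.

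To express $x$ without itself, I would use the decomposition $x=y+(x-y)$. By the definition of reduction, $y\in E$, $y\neq x$ (so $y\in E\setminus\{x\}$), and $x-y\in M$. Applying the hypothesis that $E$ generates $M$ modulo $\U(M)$ to $x-y$, I get
$$
x-y \;=\; \sum_{e\in E} a_e\, e + u, \qquad a_e\in\ZZ_+,\ u\in\U(M).
$$
Then $x=y+\sum_e a_e e+u$, and the remaining task is to eliminate the coefficient $a_x$.

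The key step, which I expect to be the only real content of the proof, is to show $a_x=0$ by a total-degree comparison. Since $y\notin\U(M)$, the criterion recalled before the statement gives $\tdeg y>0$, so $\tdeg(x-y)=\tdeg x-\tdeg y<\tdeg x$. On the other hand, applying $\tdeg$ to the generation formula and using $\tdeg u=0$ yields
$$
\tdeg(x-y) \;=\; \sum_{e\in E}a_e\,\tdeg e \;\ge\; a_x\,\tdeg x,
$$
because every other summand is nonnegative. Combining the two estimates gives $a_x\,\tdeg x<\tdeg x$, which forces $a_x=0$ provided $\tdeg x>0$.

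Finally, I would dispose of the degenerate case $\tdeg x=0$, i.e.\ $x\in\U(M)$: here the inequality $\tdeg y>0$ together with $x-y\in M$ (which requires $\tdeg(x-y)\ge0$) is already contradictory, so no reducing $y$ can exist and there is nothing to prove. With that observation the argument is complete: $x=y+\sum_{e\neq x}a_e\,e+u$ exhibits $x$ as an element of $\ZZ_+(E\setminus\{x\})+\U(M)$, and the initial reduction finishes the proof.
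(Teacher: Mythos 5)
Your argument is correct, but it takes a genuinely different route from the paper. The paper's proof is indirect: it notes that $E$ contains a Hilbert basis, invokes the characterization of Hilbert bases as systems of representatives of the nonzero classes of irreducible elements modulo $\U(M)$ (Proposition~\ref{HilbCrit}), and then splits into two cases --- if $x-y\notin\U(M)$ then $x=y+(x-y)$ shows $x$ is reducible and hence lies in no Hilbert basis, while if $x-y\in\U(M)$ then $x\equiv y$ modulo $\U(M)$ and $y$ can replace $x$ in any Hilbert basis containing it; either way $E\setminus\{x\}$ still contains a Hilbert basis. You instead prove directly that $x\in\ZZ_+(E\setminus\{x\})+\U(M)$: writing $x=y+(x-y)$, expanding $x-y$ over $E$, and killing the coefficient $a_x$ by the degree estimate $a_x\tdeg x\le\tdeg(x-y)=\tdeg x-\tdeg y<\tdeg x$, with the case $\tdeg x=0$ handled vacuously since a unit admits no reducer. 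Both proofs are sound. The paper's version is shorter and reuses machinery it has already set up, and it additionally records which elements of a Hilbert basis survive the deletion; yours is more self-contained (it needs only that $\tdeg$ is linear, nonnegative on $M$, and vanishes exactly on $\U(M)$) and more constructive, exhibiting an explicit representation of $x$ over $E\setminus\{x\}$. Your degree argument is in fact the same mechanism the paper deploys just afterwards to justify the cutoff (R1)(a) in the reduction algorithm, so the two proofs are close in spirit even though they are organized quite differently.
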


\begin{proof}
Note that $E$ contains a Hilbert basis. It is enough to show that
$E\setminus\{x\}$ contains a Hilbert basis as well. If $x-y\notin
\U(M)$, then $x$ is reducible, and does not belong to any Hilbert
basis, and if $x-y\in\U(M)$, we can replace $x$ by $y$ in any
Hilbert basis $H\subset E$ to which $x$ belongs.
\end{proof}

The proposition shows that one obtains a Hilbert basis from a set $E$ of
generators modulo $\U(M)$ by (i) removing all units from $E$, and
(ii) successively discarding elements $x$ such that $x-y\in M$ for
some $y\in E$, $x\neq y$. After finitely many reduction steps one
has reached a Hilbert basis.

The difficult question is of course to decide whether $x\in\U(M)$ or
$x-y\in M$. However, if $M=C\cap L$ with a rational cone
$C\subset\RR^d$, and a sublattice $L$ of $\QQ^d$, then this question
is very easy to decide, once the support forms
$\sigma_1,\dots,\sigma_s$ of $C$ are known:
$$
x-y\in M\quad\iff\quad x-y\in C \quad\iff\quad \sigma_i(x-y)\ge 0,\
i=1,\dots,s,
$$
and
$$
x\in\U(M)\quad\iff\quad \sigma_i(x)=0,\ i=1,\dots,s.
$$
Therefore, if $C$ is given by a set of generators, the necessity of
reduction forces us to compute the support forms of $C$. Normaliz's
approach to this task is discussed in Section~\ref{dual}.

It is very important for efficiency to make reduction as fast as
possible. Normaliz uses the following algorithm. The elements
forming a system of generators are inserted into a set $E$ ordered
by increasing total degree such that at the end of the production
phase $E$ contains at most one element from each residue class
modulo $\U(M)$ (and no element from $\U(M)$). Let
$E=\{x_1,\dots,x_m\}$. Then a Hilbert basis $H$ is extracted from
$E$. Initially, $H$ is the set of elements of minimal total degree in
$E$, say $H=\{y_1,\dots,y_u\}=\{x_1,\dots,x_u\}$. For
$i=u+1,\dots,m$ the element $x_i$ is compared to the dynamically
extended and reordered list $H=\{y_1\dots,y_n\}$ as follows:
\begin{itemize}
\item[(R1)]
for $j=1,\dots,n$,
\begin{itemize}
\item[(a)] if $\tdeg x_i<2\tdeg y_j$, then $x_i$ is appended
to $H$ as $y_{n+1}$;
\item[(b)] if $x_i-y_j\in M$, then $H$ is reordered as
$H=\{y_j,y_1,\dots,y_{j-1},y_{j+1},\dots,y_n\}$;
\end{itemize}
\item[(R2)] if (i) or (ii) does not apply for any $j$,
then $x_i$ is appended to $H$ as
$y_{n+1}$.
\end{itemize}
For the justification of this procedure, note that $x-y\in M$ for
some $y$ with $2\tdeg y\le \tdeg x$ if $x$ is reducible. Therefore
(R1)(a) can be applied, provided $ 2\tdeg y_k > \tdeg x_i$ for all
$k\ge j$. This holds since $E$ is ordered by ascending degree, the
fact that no element in $H$ that follows $y_j$ has been touched by
the rearrangement in (R1)(b): only elements  with $2\tdeg y_j \le
\tdeg x_i$ have been moved, and $\tdeg x_i\le \tdeg x_k$ for all
$k\ge i$.

\begin{remark}
(a) The ``darwinistic'' rearrangement in (R1)(b) above has a
considerable effect as all tests have shown. It keeps the
``successful reducers'' at the head of the list. Moreover,
successive $x_i$ are often close to each other (based on empirical evidence), so that $y_j$ has a
good chance to reduce $x_{i+1}$ if it reduces $x_i$.

(b) Instead of $\tdeg$ one can use any other convenient positive
linear form $\tau:M \mapsto \RR_+$ with the property that
$\tau(y)=0\Leftrightarrow y\in U(M)$.
\end{remark}

In Section \ref{dual_alg} we will encounter a situation in which the
subset $E$ to which reduction is to be applied need not to be a
system of generators. Then we call a subset $E'$ an
\emph{auto-reduction} of $E$, if $E'\cap \U(M)=\emptyset$ and no
element of $E'$ is reduced by another one.

\section{Computing the dual cone}\label{dual}

Let $C\subset\RR^d$ be a rational cone and $L\subset \ZZ^d$ a
lattice. In order to perform the reduction of a system of generators
of the normal monoid $C\cap L$ to a Hilbert basis, as discussed in
the previous section, one must know the hyperplanes that cut out $C$
from $\RR^d$, or rather the integral linear forms defining them.
These linear forms generate the dual cone $C^*$ in $(\RR^d)^*$.

Conversely, if $C$ is defined as the intersection of half-spaces
represented by a system of generators of $C^*$, then the use of an
algorithm based on a system of generators of $C$ makes it necessary
to find such a system. Since $C=C^{**}$ (see \cite[1.16]{BG}), this
amounts again to the computation of the dual cone: the passage from
$C$ to $C^*$ and that from $C^*$ to $C$ can be performed by the same
algorithm.

In the following we take the viewpoint that a full-dimensional
pointed rational cone $C\subset\RR^d$ is given by a system of
generators $E$, and that the linear forms generating $C^*$ are to be
computed. Normaliz uses the well-known Fourier-Motzkin elimination
for this task, however with a simplicial refinement that we will
describe in detail.

Fourier--Motzkin elimination is an inductive algorithm. It
starts from the zero cone, and then inserts the generators
$x_1,\dots,x_n$ successively, transforming the support
hyperplanes of $C'=\RR_+x_1+\dots+\RR_+x_{n-1}$ into those of
$C=\RR_+x_1+\dots+\RR_+x_{n}$. The transformation is given by
the following theorem; for example, see \cite[pp.~11,12]{BG}.

\begin{theorem}\label{Fourier}
Let $C$ be generated by $x_1,\dots,x_n$ and suppose that
$C'=\RR_+x_1+\dots+\RR_+x_{n-1}$ is cut out by linear forms
$\lambda_1,\dots,\lambda_m$. Let $\cP=\{\lambda_i:
\lambda_i(x_n)>0\}$, $\cN=\{\lambda_i: \lambda_i(x_n)<0\}$, and
$\cZ=\{\lambda_i: \lambda_i(x_n)=0\}$. Then $C$ is cut out by the
linear forms in the set
$$
\cP\cup\cZ\cup\{\lambda_i(x_n)\lambda_j-\lambda_j(x_n)\lambda_i:
\lambda_i\in\cP,\lambda_j\in\cN\}.
$$
\end{theorem}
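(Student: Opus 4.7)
The plan is to prove that $C$ equals the cone $D$ cut out by the set
$\cF := \cP\cup\cZ\cup\{\lambda_i(x_n)\lambda_j-\lambda_j(x_n)\lambda_i:\lambda_i\in\cP,\lambda_j\in\cN\}$
by establishing both inclusions $C\subseteq D$ and $D\subseteq C$. This is the natural setup because the right-hand side of the theorem is presented as a generating set of linear forms, and ``$C$ is cut out by'' is, by definition, a statement about the intersection of the corresponding halfspaces.

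For $C\subseteq D$, I would check that every $\mu\in\cF$ satisfies $\mu(x_k)\ge 0$ for $k=1,\dots,n$, which suffices since the $x_k$ generate $C$. If $\mu\in\cP\cup\cZ$, then nonnegativity on $x_1,\dots,x_{n-1}$ is part of the hypothesis on $C'$, and nonnegativity on $x_n$ is built into the definitions of $\cP$ and $\cZ$. For a combination form $\mu=\lambda_i(x_n)\lambda_j-\lambda_j(x_n)\lambda_i$, a direct computation gives $\mu(x_n)=0$, and on each $x_k$ with $k<n$ the value is a sum of two nonnegative products since $\lambda_i(x_n)>0$, $-\lambda_j(x_n)>0$, and $\lambda_i(x_k),\lambda_j(x_k)\ge 0$.

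For $D\subseteq C$, I would use that $C=C'+\RR_+x_n$ together with the hypothesis that $C'$ is the intersection of the halfspaces $\lambda_i\ge 0$. Given $x\in D$, it therefore suffices to produce $t\ge 0$ with $\lambda_i(x-tx_n)\ge 0$ for every $i$. The conditions indexed by $\cZ$ reduce to $\lambda_i(x)\ge 0$ and already hold because $\cZ\subseteq\cF$. Each $\lambda_i\in\cP$ contributes an upper bound $t\le\lambda_i(x)/\lambda_i(x_n)$, which is automatically nonnegative since $\lambda_i(x)\ge 0$; each $\lambda_j\in\cN$ contributes a lower bound $t\ge\lambda_j(x)/\lambda_j(x_n)$. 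A suitable $t$ exists provided the feasible interval meets $[0,\infty)$.

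The main obstacle, and really the only nontrivial step, is showing that every such lower bound is dominated by every upper bound. Multiplying the desired inequality $\lambda_j(x)/\lambda_j(x_n)\le\lambda_i(x)/\lambda_i(x_n)$ through by the negative quantity $\lambda_i(x_n)\lambda_j(x_n)$ (which reverses the inequality) yields $\lambda_i(x_n)\lambda_j(x)-\lambda_j(x_n)\lambda_i(x)\ge 0$, which is precisely the hypothesis that the corresponding combination form in $\cF$ is nonnegative at $x$. The sign bookkeeping is the delicate point. The degenerate cases $\cP=\emptyset$ (take $t$ larger than every lower bound) and $\cN=\emptyset$ (take $t=0$) are handled separately and cause no trouble.
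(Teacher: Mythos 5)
Your proof is correct and complete: the inclusion $C\subseteq D$ via nonnegativity of every form in $\cF$ on the generators, and the inclusion $D\subseteq C$ via the decomposition $C=C'+\RR_+x_n$ and the interval argument for $t$ (with the sign flip when clearing the negative denominator $\lambda_j(x_n)$) are all handled correctly, including the degenerate cases $\cP=\emptyset$ and $\cN=\emptyset$. The paper does not prove Theorem \ref{Fourier} itself but defers to \cite[pp.~11,12]{BG}, and your argument is precisely the standard one given there, so there is nothing to compare beyond noting that your version is self-contained and correct.
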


In this raw form the algorithm produces $|\cP|\cdot|\cN|$
linear forms, from which the \emph{new} facets have to be
selected. While the complexity of this algorithm may seem
negligible in view of the subsequent steps in the Hilbert basis
computation, this is no longer so if applied in the computation
of a shelling of $C$ (see Section \ref{hviash}). But in the
computation of a shelling the boundary of (a lifting of) $C$
consists mainly of simplicial facets, and this allows an
enormous acceleration. (The construction of the lifting ensures
that most of its facets are simplicial; see Remark
\ref{FMsimp}.)

In a geometric interpretation of Fourier-Motzkin elimination, we
have to find the boundary $V$ of that part of the surface of $C'$
that is visible from $x_n$, or rather the decomposition of $V$ into
subfacets (faces of $C$ of dimension $d-2$).
\begin{figure}[bht]
$$
\psset{unit=1.5cm, dotsep=0.5pt}
\def\vertex{\pscircle[fillstyle=solid,fillcolor=black]{0.04}}
\begin{pspicture}(0,0)(4,3)
\def\A{0,0}
\def\B{0.5,3}
\def\C{1.2,1.7}
\def\D{2.5,1.3}
\def\E{4,1.6}
\pspolygon[fillstyle=solid, fillcolor=medium](\A)(\B)(\C)
\pspolygon[fillstyle=solid, fillcolor=light](\A)(\C)(\B)(\D)
\psline(\C)(\D)
\psline[linestyle=dotted](\A)(\E)
\psline[linestyle=dotted](\B)(\E)
\psline[linestyle=dotted](\C)(\E)
\psline[linestyle=dotted](\D)(\E)
\rput(\A){\rput(-0.2,0){$x_1$}}
\rput(\C){\rput(0.2,0.2){$x_2$}}
\rput(\B){\rput(-0.2,0){$x_3$}}
\rput(\D){\rput(0.2,0.2){$x_4$}}
\rput(\E){\rput(0.2,0){$x_5$}}
\rput(\A){\vertex}
\rput(\B){\vertex}
\rput(\C){\vertex}
\rput(\D){\vertex}
\rput(\E){\vertex}
\end{pspicture}
$$
\caption{Cross-section of extension of the cone}\label{extend}
\end{figure}
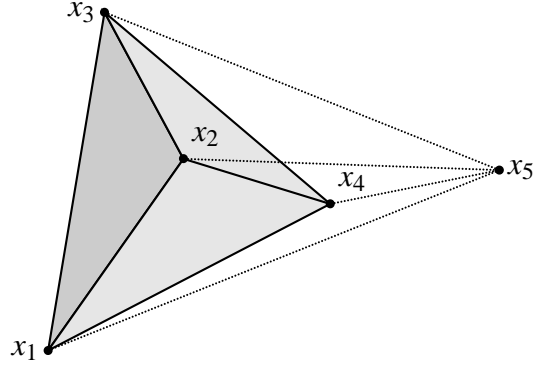
Figure \ref{extend} illustrates the inductive step of
Fourier--Motzkin elimination in the three-dimensional
cross-section of a four-dimensional cone. The area of the
``old'' cone visible from the ``new'' generator $x_5$  is the
union of the cones spanned by the triangles $[x_1,x_4,x_2]$ and
$[x_2,x_4,x_3]$, whereas $V$ is the union of the cones over the
line segments forming the cycle $[x_1,x_4,x_3,x_2,x_1]$.

Each subfacet $S$ of $C'$ is the intersection of two facets $F$ and
$G$ and we call $F$ and $G$ \emph{partners} with respect to $S$. In
order to compute the new facets of $C$ we have to find those
subfacets $S$ of $C'$ whose two overfacets belong to $\cP$ and
$\cN$, respectively. The new facets of $C$ are then the cones
$\RR_+(S\cup\{x_n\})$.

Let $E'$ be the subset of $E\setminus\{x_n\}$ whose elements are
contained in a hyperplane belonging to $\cP$ as well as in a
hyperplane belonging to $\cN$. Clearly, a facet $F$ of $C'$ can only
contribute to a new facet of $C$ if $|F\cap E'|\ge d-2$. While this
observation is useful (and is applied), its effect is often rather
limited.

Normaliz proceeds as follows; for simplicity we will identify
subsets of $E$ with the faces they generate.

(D1) It separates the facets in $\cP$ and $\cN$ into the subsets
$\cP_\simp$ and $\cN_\simp$ of simplicial ones and the subsets
$\cP_\nsmp$ and $\cN_\nsmp$ of nonsimplicial ones, discarding those
facets that do not satisfy the condition $|F\cap E'|\ge d-2$.

(D2) All subfacets of all the facets $N\in\cN_\simp$ are formed by
simply taking the subsets $S$ of cardinality $d-2$ of $N\cap E$
(which has cardinality $d-1$ in the simplicial case). The pairs
$(S,N)$ are stored in a set ordered by lexicographic comparison of
the components $S$. In fact, if $S$ appears with a second facet
$N'\in\cN_\simp$, then it cannot belong to $V$, and both pairs
$(S,N)$ and $(S,N')$ can be discarded immediately. Forming the
ordered set $\cT$ is of complexity of order $q\log_2 q$ where
$q=(d-1)|\cN_\simp|$.

(D3) Each pair $(S,N)\in \cT$ is compared to the facets in $G\in
\cN_\nsmp\cup\cZ$: if $S\subset G$, then the partner of $F$ with
respect to $S$ does not belong to $\cP$, and $(S,N)$ can be deleted
from $\cT$. (In the critical situation arising from the computation
of a shelling, the sets $\cN_\nsmp$ and $\cZ$ are usually short.)

(D4) At this point $\cT$ contains only pairs $(S,N)$ such that the
partner of $N$ with respect to $S$ indeed belongs to $\cP$, and
therefore gives rise to new facet. It remains to find the partners.

(D5) Normaliz now produces all subfacets $S$ of the facets
$P\in\cP_\simp$ and tries to find $S$ as the first component of an
element in the set $\cT$. This search is of complexity of order
$(d-1)\cdot|\cP_\simp|\cdot\log_2 q$, $q$ as above.

If the search is successful, a new facet of $C$ is produced, and the
pair $(S,N)$ is discarded from $\cT$.

(D6) To find the partners in $\cP_\nsmp$ for the remaining pairs
($S,N)$ in $\cT$, the sets $S$ are compared to the facets $\cP$ in
$\cP_\nsmp$. This comparison is successful in exactly one case,
leading to a new facet.

(D7) Finally, the facets $N\in\cN_\nsmp$ are paired with all facets
$P\in\cP$, as described in Theorem \ref{Fourier}, and whether a
hyperplane $H$  produced is really a new facet of $C$ is decided by
the following rules, applied in the order given:
\begin{enumerate}
\item[(i)] if $|N\cap P\cap E|<d-2$, then $H$ can be discarded;

\item[(ii)] if $|N\cap P\cap E|=d-2$ and $P\in\cP_\simp$, then $H$ is a new
facet;

\item[(iii)] $H$ is a new facet if and only if $\rank(N\cap P\cap E)=d-2$;

\item[(iv)] (alternative to the rank test) $H$ is a new facet if and only
if the only non-simplicial facets containing $N\cap P$ are $N$ and
$P$.
\end{enumerate}

Which of the tests (iii) or (iv) is applied, is determined as
follows: if the number of nonsimplicial facets is $<d^3$, then (iv)
is applied, and otherwise the rank test is selected.

It is not hard to see that (iv) is sufficient and necessary for
$N\cap P$ to have dimension $d-2$. Indeed, a subfacet is contained
in exactly two facets, and if we have arrived at step (iv), $P\cap
N$ cannot be contained in any simplicial facet $G$: since $P\cap
N\ge d-2$, it must be a subfacet contained in $G$, and it would
follow that $P=G$ or $N=G$, but both $P$ and $N$ are nonsimplicial.

\begin{remark}
(a) Computing the dual cone is essentially equivalent to computing
the convex hull of a finite set of points: instead of the affine
inhomogeneous system of inequalities we have to deal with its
homogenization. Therefore one could consider other convex hull
algorithms, like ``gift wrapping'' or ``beneath and beyond'' (see
\cite{J} for their comparison to Fourier--Motzkin elimination). The
main advantages of Fourier--Motzkin elimination for Normaliz are
that it does not require (but allows) the simultaneous computation
of a triangulation, and furthermore that the incremental
construction of $C$, adding one generator at a time, can be used
very efficiently in some hard computations (see \cite{BHIKS}).

(b) One can extend the idea of the simplicial refinement and work
with a triangulation of the boundary of $C'$ that is then extended
to a triangulation of the boundary of $C$, accepting that a facet
may decompose in many simplicial cones. In this way the pairing of
``positive'' and ``negative'' facets can be reduced to the creation
of a totally ordered set and the search in such a set. In our tests
the separate treatment of simplicial and nonsimplicial facets turned
out superior.
\end{remark}

\section{The primal Normaliz algorithm}\label{primal}

The \emph{primal} algorithm of Normaliz proceeds as follows (after
the initial coordinate transformation discussed in \cite[Section
2]{BK}), starting from a system of generators $x_1,\dots,x_n$ of
$C$:
\begin{itemize}
\item[(N1)] the support hyperplanes of $C$ are computed as described by
Fourier-Motzkin elimination (Section \ref{dual});
\item[(N2)] intertwined with (N1), the lexicographic (or placing)
triangulation of $C$ is computed into which the generators
$x_1,\dots,x_n$ are inserted in this order;
\item[(N3)] for each simplicial cone $D$ in the triangulation
$\Hilb(D\cap \ZZ^d)$ is determined;
\item[(N4)] the union of the sets $\Hilb(D\cap \ZZ^d)$ is reduced to
$\Hilb(C)$.
\end{itemize}
After the completion of (N1) one knows $C^*$ and can decide whether
$C$ is pointed since pointedness of $C$ is equivalent to
full-dimensionality of $C^*$ (see \cite[1.19]{BG}).

In step (N2) the lexicographic triangulation $\Sigma'$ of
$C'=\RR_+x_1,\dots+\RR_+x_{n-1}$ is extended to a lexicographic
triangulation $\Sigma$ of $C=\RR_+x_1,\dots+\RR_+x_{n}$ as follows:
Let $F_1,\dots,F_v$ be those facets of the maximal cones in
$\Sigma'$ that lie in the facets of $C$ visible from $x_n$: then
$\Sigma=\Sigma'\cup\{F_i+\RR_+x_n:i=1,\dots,v\}$. (If $x_n\in C'$,
then $C=C'$ and $\Sigma=\Sigma'$.) (Compare \cite[p.~267]{BG} for
lexicographic triangulations.) This construction is illustrated by
Figure \ref{triext}.
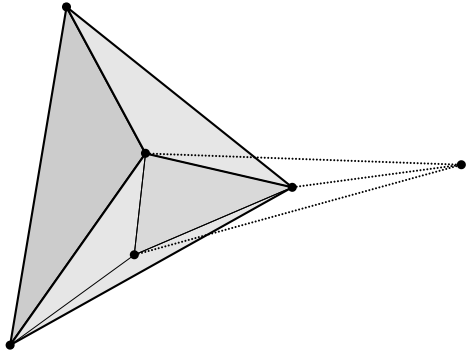
\begin{figure}[bht]
$$
\psset{unit=1.5cm, dotsep=0.5pt}
\def\vertex{\pscircle[fillstyle=solid,fillcolor=black]{0.04}}
\definecolor{veryverylight}{gray}{0.85}
\begin{pspicture}(0,0)(4,3)
\def\A{0,0}
\def\B{0.5,3}
\def\C{1.2,1.7}
\def\D{2.5,1.4}
\def\E{4,1.6}
\def\F{1.1,0.8}
\pspolygon[fillstyle=solid, fillcolor=medium](\A)(\B)(\C)
\pspolygon[fillstyle=solid, fillcolor=light](\A)(\C)(\B)(\D)
\pspolygon[linewidth=0pt,fillstyle=solid, fillcolor=veryverylight](\F)(\C)(\D)
\psline(\C)(\D)
\psline[linestyle=dotted](\C)(\E)
\psline[linestyle=dotted](\D)(\E)
\psline[linestyle=dotted](\F)(\E)
\psline[linewidth=0.3pt](\A)(\F)
\psline[linewidth=0.3pt](\C)(\F)
\psline[linewidth=0.3pt](\D)(\F)
\rput(\A){\vertex}
\rput(\B){\vertex}
\rput(\C){\vertex}
\rput(\D){\vertex}
\rput(\E){\vertex}
\rput(\F){\vertex}
\end{pspicture}
$$
\caption{Extension of triangulation}\label{triext}
\end{figure}

It only remains to explain how a set $E_D$ of generators of $D\cap
\ZZ^d$ is determined if $D$ is simplicial, i.e., generated by a
linearly independent set $V=\{v_1,\dots,v_d\}\subset \ZZ^d$.
Following the notation of \cite{BG}, we let
$$
\para(v_1,\dots,v_d)=\{a_1v_1+\dots+a_dv_d:0\le a_i<1,\
i=1,\dots,d\}
$$
denote the semi-open parallelotope spanned by $v_1,\dots,v_d$. Then
the set
\begin{equation}
E=E_D=\para(v_1,\dots,v_d)\cap\ZZ^d\label{defE}
\end{equation}
generates $D\cap \ZZ^d$ as a free module over the free submonoid
$\ZZ_+v_1+\dots+\ZZ_+v_d$. In other words, every element $z\in
D\cap\ZZ^d$ has a unique representation
\begin{equation}\label{freefree}
z=x+\sum_{i=1}^d a_iv_i,\qquad x\in E,\ a_i\in\ZZ_+.
\end{equation}
See \cite[2.43]{BG} for this simple, but crucial fact.   Clearly
$E\cup \{v_1,\dots,v_d\}$ generates the monoid $D\cap \ZZ^d$. Figure
\ref{FigSemiOpen} illustrates the construction of $E$.
\begin{figure}[htb]
$$
\psset{unit=0.7cm}
\definecolor{verylight}{gray}{0.75}
\definecolor{veryverylight}{gray}{0.85}
\newpsstyle{fyp}{fillstyle=solid,fillcolor=veryverylight}
\begin{pspicture}(0,-0.0)(5,6)
\pspolygon[style=fyp,linecolor=white,linewidth=0pt](0,0)(1.833,5.5)(4.5,5.5)(4.5,2.25)
\pspolygon[fillstyle=solid, fillcolor=verylight,linecolor=white,
linewidth=0pt](0,0)(1,3)(3,4)(2,1)
\psline{->}(0,0)(1,3)
\psline{->}(0,0)(2,1)
\multirput(0,0)(1,0){5}{\multirput(0,0)(0,1){6}{\vertex}}
\rput(0,0){\pscircle{0.18}}
\rput(1,1){\pscircle{0.18}}
\rput(1,2){\pscircle{0.18}}
\rput(2,2){\pscircle{0.18}}
\rput(2,3){\pscircle{0.18}}
\rput(-0.6,0){0}
\rput(2,0.6){$v_1$}
\rput(0.6,3){$v_2$}
\end{pspicture}
$$
\caption{The semi-open parallelotope and the set $E$}
\label{FigSemiOpen}
\end{figure}
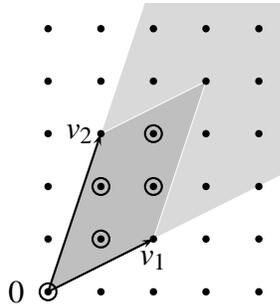

The efficient computation of $E_d$ has been discussed in \cite{BK};
it amounts to finding a representative $z$ for each residue class in
$\ZZ^d/(\sum \ZZ v_i)$ and reducing it modulo $v_1,\dots,v_d$ to its
representative in $\para(v_1,\dots,v_d)$.

In the following the attribute \emph{local} refers to the simplicial
cones $D$ whereas \emph{global} refers to $C$.

While the primal algorithm had been realized already in the first
version of Normaliz (see \cite{BK}), it has now undergone several
refinements.

\begin{remark}\label{multipl}
(a) The lexicographic triangulation is used by Normaliz in its
``normal'' (meaning ``standard'') computation type. It is replaced
by a shelling if the $h$-vector is to be computed (see Section
\ref{hviash}).

(b) The total number of vectors generated by Normaliz is the sum of
the multiplicities $\mu(D)=|\det(v_1,\dots,v_d)|$ of the simplicial
cones $D$ in the triangulation. If the monoid is defined by a
lattice polytope $P$, then this number is the $\ZZ^d$-normalized
volume of $P$, and therefore independent of the triangulation. (This
count includes the zero vector in each simplicial cone; therefore
the number of simplicial cones should be subtracted from the sum of
multiplicities.)
\end{remark}

\section{$h$-vectors via shellings}\label{hviash}

For $N\subset \RR^d$ we set
$$
H_N(\bt)=\sum_{x\in N\cap\ZZ^d}\bt^x.
$$
Here we use multi-exponent notation: $\bt^x=t_1^{x_1}\cdots
t_d^{x_d}$. The formal Laurent series is simply the ``characteristic
series'' of the set $N\cap\ZZ^d\subset\ZZ^d$. If $N$ is an
``algebraic'' object (for example, an affine monoid), then we can
interpret $H_N(\bt)$ as the multigraded Hilbert series of $N$.

Let $C$ be a cone and $M=C\cap\ZZ^d$. Suppose that $C$ is
triangulated by the conical complex $\cC$, the standard situation in
the primal algorithm of Normaliz. If $C_1,\dots,C_m$ are the maximal
cones in $\cC$, then
\begin{equation}\label{indH}
H_M(\bt)=\sum_{i=1}^m H_{D_i}(\bt),\qquad D_i=C_i\setminus
(C_1\cup\dots\cup C_{i-1}),\ i=1,\dots, m.
\end{equation}

Within $C_i$, the set $D_i$, $i=1,\dots,m$, is the complement of the
union of the sets of faces of $C_1,\dots,C_ {i-1}$. Therefore it is
the union of the (relative) interiors of those faces of $C_i$ that
are not contained in $C_1\cup\dots\cup C_{i-1}$. In order to compute
$H_{D_i}(\bt)$, one has to solve two problems: (i) to compute
$H_{\inte D}(\bt)$ for a simplicial cone $D$, and (ii) to find the
decomposition of $D_i$ as a union of interiors of faces of $C_i$.

As in Section \ref{primal} we denote the linearly independent
generators of the simplicial cone $D$ by $v_1,\dots,v_d$ and
consider the system of generators $E=E_D$. For a subset $Y$ of
$V=\{v_1,\dots,v_d\}$, let
$$
\cH_Y(\bt)=H_{\ZZ_+Y}(\bt)=\prod_{v_i\in
Y}\frac1{1-\bt^{y_i}}\quad\text{and}\quad\bt^Y=\prod_{v_i\in
Y}\bt^{v_i}.
$$
By definition, $\cH_Y(\bt)$ is the Hilbert series of the free monoid
generated by $Y$. In view of equation \eqref{freefree} one obtains
$$
H_D(\bt)=\cH_V(\bt)\sum_{x\in E} \bt^x.
$$

Now problem (i) is easily solved (compare \cite[p.~234]{BG}):
$$
H_{\inte(D)}=(-1)^{\dim D}H_D(\bt^{-1})=\cH_V(\bt)\sum_{x\in
E}\bt^{v_1+\dots+v_d-x}.
$$
Problem (ii) is very hard for an arbitrary order of the cones $C_i$
in the triangulation. However, it becomes easy if $C_1,\dots,C_m$ is
a shelling. Shellings are the classical tool for the investigation
of $h$-vectors, as demonstrated by McMullen's proof of the upper
bound theorem (see \cite{BH} or \cite{Zie}). We need the notion of
shelling only for complexes of simplicial cones (or polytopes), for
which it reduces to a purely combinatorial condition.

\begin{definition}
Let $\cC$ be a complex of simplicial cones (or polytopes) whose
maximal cones have constant dimension $d$. An order $C_1,\dots,C_m$
of the maximal cones in $\cC$ is called a \emph{shelling} if
$C_i\cap(C_1\cup\dots\cup C_{i-1})$ is a union of facets of $C_i$
for all $i$.
\end{definition}

The next lemma solves problem (ii) for a shelling. For a compact
formula we need one more piece of notation: for $x\in E$,
$x=a_1v_1+\dots+a_dv_d$, let
$$
[x]=\{v_i:a_i\neq 0\}.
$$

\begin{lemma}\label{hvectshell}
Let $D\subset\RR^d$ be a simplicial cone of dimension $d$ generated
by the linearly independent set $V=\{v_1,\dots,v_d\}\subset \ZZ^d$.
Let $G$ be the union of some facets $F$ of $D$, and set
$W=\bigcup_{F\subset G} V\setminus F$. Then
\begin{align*}
H_{D\setminus G}(\bt)&=\cH_V(\bt)\sum_{x\in
E} \bt^{-x}\bt^{W\cup[x]}\\
&=\cH_V(\bt)\sum_{x\in E} \bt^x \bt^{W\setminus[x]}.
\end{align*}
\end{lemma}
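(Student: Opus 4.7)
The strategy is to derive the second formula $H_{D\setminus G}(\bt) = \cH_V(\bt)\sum_{x\in E}\bt^x\bt^{W\setminus[x]}$ by direct enumeration using the unique decomposition \eqref{freefree}, and then to identify it with the first formula via a short involution on $E$.

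First I would unpack how the facets of $D$ sit inside $V$: for each facet $F$ of $D$ there is a unique $v_j\in V\setminus F$, so $W$ is identified with the set of $v_j$ such that the facet $F_j$ opposite $v_j$ lies in $G$. Given $z\in D\cap\ZZ^d$, write it uniquely as $z=x+\sum_i a_iv_i$ with $x=\sum_i b_iv_i\in E$ (so $0\le b_i<1$) and $a_i\in\ZZ_+$. Over the basis $V$ of $\RR^d$ the point $z$ has real expansion $\sum_i(b_i+a_i)v_i$, and therefore $z\in F_j$ precisely when $b_j+a_j=0$, i.e.\ when $v_j\notin[x]$ and $a_j=0$. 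Hence
$$z\in D\setminus G\iff a_j\ge 1\text{ for every }v_j\in W\setminus[x].$$
Fixing $x\in E$ and summing $\bt^z=\bt^x\prod_i\bt^{a_iv_i}$ over admissible $(a_1,\dots,a_d)$, the substitution $a_j\mapsto a_j-1$ on the indices $v_j\in W\setminus[x]$ converts this into an unconstrained sum over $\ZZ_+^d$, producing the factor $\cH_V(\bt)$ together with the prefactor $\bt^{W\setminus[x]}$ from the shifts. Summing over $x\in E$ gives the second formula.

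To identify this with the first formula, I would use the involution $\iota\colon E\to E$ defined by $\iota(x)=\sum_{v_i\in[x]}v_i-x$. Writing $x=\sum_{v_i\in[x]}b_iv_i$ with $0<b_i<1$, one sees that $\iota(x)=\sum_{v_i\in[x]}(1-b_i)v_i$ is again in $E$, that $[\iota(x)]=[x]$, and that $\iota^2=\mathrm{id}$. Replacing the summation index $x$ by $\iota(x)$ in the first sum and using $\bt^{-\iota(x)}=\bt^x\bt^{-[x]}$ together with the disjoint decomposition $W\cup[x]=[x]\sqcup(W\setminus[x])$ yields
$$\bt^{-\iota(x)}\bt^{W\cup[\iota(x)]}=\bt^x\bt^{-[x]}\bt^{[x]}\bt^{W\setminus[x]}=\bt^x\bt^{W\setminus[x]},$$
so the two sums over $E$ coincide. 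The only genuinely delicate point is the facet-membership criterion in the second paragraph; once it is in hand, everything else is bookkeeping with $\cH_V(\bt)$ and the monomials $\bt^Y$.
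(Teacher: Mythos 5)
Your proof is correct, and it reaches the first formula by a genuinely different route than the paper. Your derivation of the second formula---via the disjoint decomposition $D\cap\ZZ^d=\bigsqcup_{x\in E}(x+L)$ with $L=\ZZ_+v_1+\dots+\ZZ_+v_d$, together with the criterion that $z=x+\sum_i a_iv_i$ avoids the facet opposite $v_j$ iff $v_j\in[x]$ or $a_j\ge 1$---is precisely the argument the paper only sketches in its closing two lines (``the proof of the second formula is actually simpler''), and the criterion you flag as delicate is sound, since the coordinates of $z$ in the basis $V$ are uniquely $b_j+a_j$ with $b_j\in[0,1)$ and $a_j\in\ZZ_+$. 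The divergence is in the first formula: the paper proves it directly by observing that, $D$ being simplicial, a face lies outside $G$ iff its generating set contains $W$, so that $H_{D\setminus G}=\sum_{Y\supset W}H_{\inte(\RR_+Y)}(\bt)$; it then inserts the reciprocity formula $H_{\inte(\RR_+Y)}(\bt)=\bt^Y\cH_Y(\bt)\sum_{x}\bt^{-x}$ (sum over $x\in E$ with $[x]\subset Y$), interchanges the order of summation, and collapses the sum over $Y\supset W\cup[x]$ into the factor $\cH_V(\bt)$. You instead deduce the first formula from the second via the involution $\iota(x)=\sum_{v_i\in[x]}v_i-x$ on $E$, using $[\iota(x)]=[x]$ and $W\cup[x]=[x]\sqcup(W\setminus[x])$; this is a clean and complete argument. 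The paper's route makes the connection with interior Hilbert series (and hence with the shelling formalism of the surrounding section) explicit, whereas yours is more elementary, fully fleshes out the half of the proof the paper leaves to the reader, and in addition exhibits the two displayed sums as matched term by term under the natural reciprocity symmetry of the parallelotope rather than merely both equal to $H_{D\setminus G}(\bt)$.
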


\begin{proof}
Let $Y\subset V=\{v_1,\dots,v_d\}$. For simplicity of notation we
set
$$
E(Y)=\para(Y)\cap\ZZ^d=\{x\in E:[x]\subset Y\},
$$
Moreover, note that since $D$ is simplicial, a face of $D$ is not
contained in $G$ if and only if it contains $W$.

Then
\begin{align*}
H_{D\setminus G}&=\sum_{Y\supset W} H_{\inte(\RR_+Y)}(\bt)
= \sum _{Y\supset W} \sum_{x\in E(Y)}
\bt^Y\bt^{-x}\cH_Y(\bt)
=\sum_{x\in E}\bt^{-x}\sum_{Y\supset W\cup[x]}\bt^Y \cH_Y(\bt)\\
&=\sum_{x\in
E}\bt^{-x}\bt^{W\cup[x]}\cH_{W\cup[x]}(\bt)\sum_{Z\subset
V\setminus(W\cup[x])}\bt^Z\cH_Z(\bt)\\
&=\cH_V(\bt)\sum_{x\in E}\bt^{-x}\bt^{W\cup[x]}.
\end{align*}

The proof of the second formula is actually simpler. Let $L$ be the
free monoid generated by $v_1,\dots,v_d$. Then
$D\cap\ZZ^d=\sum_{x\in E} x+L$. Now one computes $(x+L)\setminus G$,
and obtains the result.
\end{proof}

In the present implementation Normaliz uses the first formula in
Lemma \ref{hvectshell}, but only in the case in which there is an
integral linear form $\gamma$ such that the given generators of the
cone $C$ have value $1$ under $\gamma$ (this case is called
\emph{homogeneous}). Then $\gamma$ induces a $\ZZ$-grading on
$M=C\cap\ZZ^d$ in which all generators of all the simplicial cones
$C_1,\dots,C_m$ in the triangulation have degree $1$, and Lemma
\ref{hvectshell} specializes to
\begin{equation}
H_{C_i\setminus G}(t)=\frac1{(1-t)^d}\sum_{x\in E}
t^{|W\cup[x]|-\deg x} =\frac1{(1-t)^d}\sum_{x\in E}
t^{|W\setminus[x]|+ \deg x}.
\end{equation}
Therefore one needs only to count each element $x\in E$ (including
$0$!) in the right degree to obtain the $h$-vector of the cone $C$.

The price to be paid for the simple computation of the $h$-vector is
the construction of a shelling. The classical tool for this purpose
is a line shelling as introduced by Brugesser and Mani.

First we ``lift'' the cone $C\subset\RR^d$ generated by
$v_1,\dots,v_m$ to a cone $C'\subset \RR^{d+1}$ by extending the
generating elements by positive weights:
$$
v_i'=(v_i,w_i)\in\ZZ^{d+1},\quad w_i>0.
$$
The \emph{bottom} $B$ of $C'$ is the conical complex formed by all
the facets (and their faces) that are ``visible from below'', more
precisely by all the facets $F$ of $F'$ whose corresponding support
form $\sigma_F\in (\RR^{d+1})^*$ has positive last coordinate. The
projection $\RR^{d+1}\to\RR^d$, $(a_1,\dots,a_{d+1})\mapsto
(a_1,\dots,a_{d})$, maps $B$ bijectively onto $C$, and the images of
the facets constitute a conical subdivision of $C$. We always choose
the weights in such a way that the facets in the bottom of $C'$ are
simplicial, and therefore we obtain a triangulation of $C$. (This is
the classical construction of regular triangulations; compare
\cite[1.F]{BG}.)

It follows from \cite[Theorem 8.1]{Zie} that this triangulation is
shellable, and in order to reduce our conical situation to the
polytopal one in \cite{Zie}, one simply works with a suitable
polytopal cross-section of $C'$.

\begin{remark}\label{FMsimp}
Although it is superfluous, we also keep the ``top'' of $C'$
simplicial by a suitable choice of weights. The only facets of
$C'$ that cannot always be made simplicial are ``vertical''
ones, namely those parallel to the direction of projection.
Each vertical facet of $C'$ corresponds to a (non-simplicial)
facet of $C$ whereas the bottom and top facets correspond to
the simplicial cones in triangulations of $C$. Since such
triangulations usually have many more cones than $C$ has
support hyperplanes, $C'$ has mainly simplicial facets, and for
this reason we have developed the simplicial refinement of
Fourier--Motzkin elimination in Section \ref{dual}.
\end{remark}

The proof of \cite[Theorem 8.1]{Zie} tells us how to find a
shelling. We choose a point $x\in\inte(C')$ such that the ray
$x+\RR_+v$, $v=(0,\dots,0,-1)\in\RR^{d+1}$, is intersected at
pairwise different points $x+t_Fv$ by the linear subspaces $\RR F$
where $F$ runs through the facets in the bottom. Then we order the
facets by ascending ``transition times'' $t_F$. The images of the
facets $F$, ordered in the same way, yield a simplicial shelling of
$C$ since the projection preserves the face relation in the complex.
The construction of the shelling is illustrated by Figure
\ref{FigShell}.
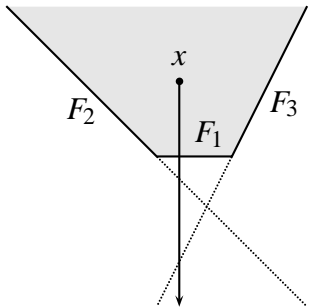
\begin{figure}
$$
\psset{unit=1.0cm, dotsep=0.5pt}
\def\vertex{\pscircle[fillstyle=solid,fillcolor=black]{0.05}}
\begin{pspicture}(0,-2)(4,2)
\pspolygon[linewidth=0pt,linecolor=white,fillstyle=solid, fillcolor=light]%
     (0,2)(2,0)(3,0)(4,2)
\psline(0,2)(2,0)(3,0)(4,2)
\psline[linestyle=dotted](2,0)(4,-2)
\psline[linestyle=dotted](3,0)(2,-2)
\psline{->}(2.3,1)(2.3,-2)
\rput(2.3,1){\vertex}
\rput(2.3,1.3){$x$}
\rput(2.7,0.3){$F_1$}
\rput(1.0,0.6){$F_2$}
\rput(3.7,0.7){$F_3$}
\end{pspicture}
$$
\caption{The line shelling}\label{FigShell}
\end{figure}

It is not difficult to produce a point $x$ in $\inte(C')$, but one
may need several attempts to ensure that the transition times are
all different. Instead we choose $x$ only once and then replace it
by a point infinitely near to $x$. This trick is known as "simulation of simplicity"
in computational geometry (see \cite{E}).

For the next lemma it is convenient to replace the integral support
forms $\sigma_F$ of the bottom faces by their rational multiples
$\rho_F=-\sigma_F/\sigma_F(v)$, $v=(0,\dots,0,-1)\in\RR^{d+1}$ as
above. These are normed in such a way that $\rho_F(v)=-1$ (and
$\rho_F/\sigma_F>0$).

\begin{lemma}\label{shell}
Let the bottom facets of $C'$ be ordered by the following rule:
$F<\tilde F$ if $\rho_F(x)<\rho_{\tilde F}(x)$ or
$\rho_F(x)=\rho_{\tilde F}(x)$ and $\rho_F$ precedes $\rho_{\tilde
F}$ in the lexicographic order on $(\RR^{d+1})^*$.

Then the bottom facets of $C'$ form a shelling in this order.
\end{lemma}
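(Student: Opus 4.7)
The plan is to recognize that $\rho_F(x)$ is literally the Brugesser--Mani transition time, then to interpret the lexicographic tie-breaker as a canonical ``simulation of simplicity'' perturbation of $x$ that reduces the general case to the classical one cited from \cite{Zie}.

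First I would verify the geometric meaning of $\rho_F(x)$. Since $\rho_F(v)=-1$, the equation $\rho_F(x+tv)=0$ gives $t=\rho_F(x)$, so $x+\rho_F(x)\,v$ is precisely the point where the downward ray $x+\RR_+v$ meets the hyperplane $\RR F$. Thus $\rho_F(x)$ is exactly the transition time $t_F$. If the values $\rho_F(x)$ are pairwise distinct as $F$ runs over the bottom facets, then Ziegler's Theorem 8.1 (applied via a polytopal cross-section of $C'$, as already indicated in the text preceding the lemma) immediately yields that ordering the bottom facets by ascending $\rho_F(x)$ is a shelling.

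To handle ties I would introduce the one-parameter perturbation
\[
x(\eta)=x+\sum_{i=1}^{d+1}\eta^{i}e_i,\qquad \eta>0,
\]
where $e_1,\dots,e_{d+1}$ is the standard basis of $\RR^{d+1}$. Since $\inte(C')$ is open and $x\in\inte(C')$, we have $x(\eta)\in\inte(C')$ for all sufficiently small $\eta>0$. Evaluating,
\[
\rho_F(x(\eta))=\rho_F(x)+\sum_{i=1}^{d+1}\eta^{i}\rho_F(e_i),
\]
which is a polynomial in $\eta$ whose coefficient sequence is $(\rho_F(x),\rho_F(e_1),\dots,\rho_F(e_{d+1}))$. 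Distinct bottom facets have distinct primitive support forms, hence distinct $\rho_F$, so these polynomials are pairwise distinct; in particular, for $\eta>0$ small enough the values $\rho_F(x(\eta))$ are pairwise distinct, and Ziegler's theorem applies to $x(\eta)$.

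It then remains to check that for $\eta>0$ small the ordering induced by $\rho_F(x(\eta))$ is precisely the one stated in the lemma. Comparing two polynomials $\rho_F(x(\eta))$ and $\rho_{\tilde F}(x(\eta))$ for $\eta\to 0^+$, the sign of the difference is determined by the first non-zero coefficient of the difference of coefficient sequences, i.e.\ first by $\rho_F(x)$ vs.\ $\rho_{\tilde F}(x)$, and in case of equality by $\rho_F(e_1)$ vs.\ $\rho_{\tilde F}(e_1)$, and so on. Identifying $\rho_F\in(\RR^{d+1})^*$ with its coefficient tuple $(\rho_F(e_1),\dots,\rho_F(e_{d+1}))$, this is exactly the ordering defined in the lemma. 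Applying the distinct-times case to $x(\eta)$ for any such small $\eta$ therefore proves the lemma.

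The only mildly delicate point is the last one, the matching of the perturbed ordering with the lex rule; everything else is either a direct computation ($\rho_F(x)=t_F$) or a citation of \cite[Theorem 8.1]{Zie}. There is no real obstacle, since the ``simulation of simplicity'' idea is exactly tailored to convert a lexicographic tie-break into a limit of generic perturbations.
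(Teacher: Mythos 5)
Your proof is correct and follows essentially the same route as the paper's: both identify $\rho_F(x)$ with the transition time (using $\rho_F(v)=-1$) and resolve ties by a perturbation of $x$ that realizes the lexicographic order, then invoke \cite[Theorem 8.1]{Zie}. The only difference is cosmetic: you make the perturbation explicit as $x+\sum_i\eta^i e_i$, whereas the paper posits a single weight vector $w$ with $\rho_F(w)$ inducing the lexicographic order and perturbs to $x+\epsilon w$.
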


\begin{proof}
Note that there exists a weight vector $w\in\RR^{d+1}$ such that
$\rho_F$ precedes $\rho_{\tilde F}$ in the lexicographic order if
and only $\rho_F(w)<\rho_{\tilde F}(w)$. For sufficiently small
$\epsilon>0$ our ordering is identical with that obtained from the
inequality $\rho_F(x+\epsilon w)<\rho_{\tilde F}(x+\epsilon w)$.

The transition time $t_F$ of the ray $(x+\epsilon w)+\RR_+v$ with
the linear subspace spanned by $F$ is given by
$$
t_F=-\frac{\rho_F(x+\epsilon w)}{\rho_F(v)}=\rho_F(x+\epsilon w),
$$
and we have indeed ordered the facets by increasing transition
times.
\end{proof}

\begin{remark}
After the mathematical foundation  for the computation of Hilbert
functions has been laid in Lemmas \ref{hvectshell} and \ref{shell},
we describe the essential details of the implementation.

(S1) Normaliz computes the support hyperplanes of $C$---these are
needed anyway---and extracts the extreme integral generators from
the given set of generators in order to use the smallest possible
system of generators for $C'$.

(S2) The support hyperplanes of $C'$ are computed by Fourier-Motzkin
elimination with simplicial refinement as described in Section
\ref{dual}. It is here where the simplicial refinement shows its
efficiency since the bottom (and top) facets of $C'$ are kept
simplicial by a suitable ``dynamic'' choice of the weights.

Note that the vertical facets of $C'$, namely those parallel to $v$,
cannot be influenced by the choice of weights. They are determined
by the facet structure of $C$.

(S3) Once the support hyperplanes of $C'$ have been computed, the
bottom facets are ordered as described in Lemma \ref{shell}. Let
$C_1<\dots<C_m$ be the correspondingly ordered simplicial cones that
triangulate $C$. In order to apply Lemma \ref{hvectshell} we have to
find the intersections $C_i\cap (C_1\cup\dots\cup C_{i-1})$. To this
end we do the following: we start with an empty set $\cF$, and in
step $i$ ($i=1,\ldots,m$) we insert the facets of $C_i$ into $\cF$.
(i) If a facet is already in $\cF$, then it is contained in
$C_1\cup\dots\cup C_{i-1}$. Since it can never appear again, it is
deleted from $\cF$. (ii) Otherwise it is a ``new'' facet and is kept
in $\cF$.

\end{remark}

\section{Cutting cones by halfspaces}\label{dual_alg}

The primal algorithm of Normaliz builds a cone $C$ by starting from
$0$ and adding the generators $x_1,\dots,x_n$ successively. The
algorithm we want to discuss now (esentially due to Pottier
\cite{Pot}) builds the dual cone $C^*$ successively by staring from
$0$ and adding generators $\lambda_1,\dots,\lambda_s$. On the primal
side this amounts to cutting out the cone $C$ from $0^*=\RR^d$ by
successively intersecting the cone reached with the halfspace
$H_{\lambda_i}^+$, $i=1,\dots,s$, until one arrives at $C$.

If one wants to compute the Hilbert basis of $C$ via this
construction, then one has to understand how to obtain the Hilbert
basis of an intersection $D\cap H^+$ from that of the cone $D$.

Since we start from the full space $\RR^d$ and we cannot reach a
pointed cone before having cut it with at least $d$ halfspaces, we
use the general notion of a Hilbert basis as introduced in Section
\ref{affmon}. (In the following we do not assume that $C$ or $C^*$
is a full-dimensional cone.) Of course, in addition to the Hilbert
basis $B$ of $M=C\cap\ZZ^d$, we also need a description of the group
$\U(M)$ by a $\ZZ$-basis.

The halfspace $H^+$ is given by an integral linear form $\lambda$,
$H^+=H_\lambda^+$. In the following the superscript~$\vphantom{M}^+$
denotes intersection with $H_\lambda^+$, and the
superscript~$\vphantom{M}^-$ denotes intersection with
$H_\lambda^-$.

There are two cases that must be distinguished:
\begin{itemize}
\item[(a)] $\lambda$ vanishes on $\U(M)$; in this case $\U(M^+)=\U(M^-)=\U(M)$.
\item[(b)] $\lambda$ does not vanish on $\U(M)$; in this case
$\U(M^+)=\U(M^-)$ is a proper subgroup of $\U(M)$ such that $\rank
\U(M^+)=\rank \U(M)-1$. Moreover $\U(M)^+$ has a Hilbert basis
consisting of a single element $h$, and then $-h$ constitutes a
Hilbert basis of $\U(M)^-$.
\end{itemize}
Note that case (a) automatically applies if $D$ is pointed.

The following algorithm computes the Hilbert bases of $M^+$, $M^-$
and a basis of the group $\U(M^+)=\U(M^-)$, starting from a Hilbert
basis $B$ of $M$ and a basis of $\U(M)$.

\begin{itemize}
\item[(D1)] Compute a basis of $\U(M^+)=\U(M^-)=\Ker \lambda|\U(M)$. If
$\U(M^+)=\U(M)$, then we are in case (a). Otherwise $\rank
\U(M^+)=\rank \U(M)-1$ and we are in case (b).

\item[(D2)] In case (b) supplement the basis of $\U(M^+)$ to a basis
of $\U(M)$ by an element $h\in\U(M)^+$.

\item[(D3)] Set $B_0=B$.

\item[(D4)] In case (b) replace every element $x\in B_0^+$ by $x-ah$ where
$a=\lfloor \lambda(x)/\lambda(h)\rfloor$, and every element $x\in
B_0^-$ $x-a(-h)$, $a=\lfloor \lambda(-x)/\lambda(h)\rfloor$.

\item[(D5)] In case (b) replace $B_0$ by $B_0\cup\{h,-h\}$.

\item[(D6)] For $i>0$ set
$$
\tilde B_i=B_{i-1}\cup \bigl\{x+y:x,y\in B_{i-1},\
\lambda(x)>0,\lambda(y)<0, x+y\neq 0\bigr\}.
$$

\item[(D7)] Replace $\tilde B_i^+$ by its auto-reduction $B_i^+$ in $D^+$,
and $\tilde B_i^-$ by its auto-reduction $B_i^-$ in $D^-$, and let
$B_i=B_i^+\cup B_i^-$.

\item[(D8)] If $B_i=B_{i-1}$, then we are done, $B_{i-1}^+$ is a Hilbert basis
of $D^+$, and $B_{i-1}^-$ is a Hilbert basis of $D^-$.
\end{itemize}
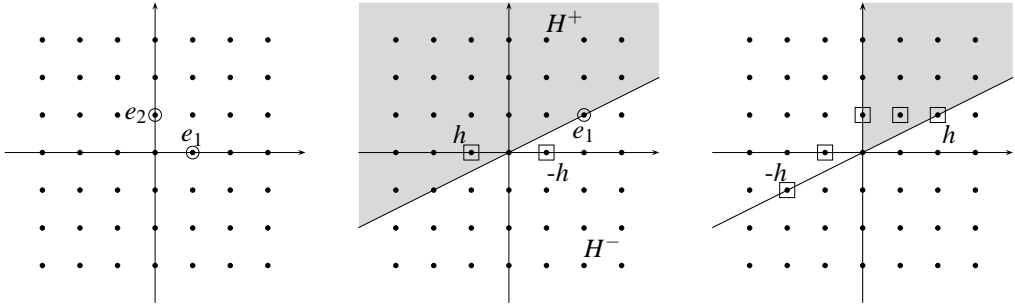
\begin{figure}
\footnotesize
$$
\psset{unit=0.5cm, linewidth=0.3pt}
\definecolor{veryverylight}{gray}{0.85}
\begin{pspicture}(-4,-4)(4,4)
\multirput(-3,-3)(1,0){7}{\multirput(0,0)(0,1){7}{\vertex}}
\psline{->}(0,-4)(0,4)
\psline{->}(-4,0)(4,0)
\rput(1,0.4){$e_1$}
\rput(-0.5,1){$e_2$}
\rput(0,1){\pscircle{0.18}}
\rput(1,0){\pscircle{0.18}}
\end{pspicture}
\qquad
\begin{pspicture}(-4,-4)(4,4)
\pspolygon[linecolor=white, linewidth=0pt, fillstyle=solid,
fillcolor=veryverylight](-4,-2)(4,2)(4,4)(-4,4)
\multirput(-3,-3)(1,0){7}{\multirput(0,0)(0,1){7}{\vertex}}
\psline{->}(0,-4)(0,4)
\psline{->}(-4,0)(4,0)
\psline[linewidth=0.4pt](-4,-2)(4,2)
\rput(1.5,3.5){$H^+$}
\rput(2.5,-2.5){$H^-$}
\rput(1.3,-0.5){-$h$}
\rput(-1.3,0.5){$h$}
\rput(2,0.5){$e_1$}
\rput(2,1){\pscircle{0.18}}
  \rput(-1,0){\pspolygon(-.2,-.2)(.2,-.2)(.2,.2)(-.2,.2)}
  \rput(1,0){\pspolygon(-.2,-.2)(.2,-.2)(.2,.2)(-.2,.2)}
\end{pspicture}
\qquad
\begin{pspicture}(-4,-4)(4,4)
\pspolygon[linecolor=white, linewidth=0pt, fillstyle=solid,
fillcolor=veryverylight](0,0)(4,2)(4,4)(0,4)
\multirput(-3,-3)(1,0){7}{\multirput(0,0)(0,1){7}{\vertex}}
\psline[linewidth=0.4pt]{->}(0,-4)(0,4)
\psline{->}(-4,0)(4,0)
\psline[linewidth=0.4pt](-4,-2)(4,2)
\rput(2.3,0.5){$h$}
\rput(-2.3,-0.5){-$h$}
\rput(0,1){\pspolygon(-.2,-.2)(.2,-.2)(.2,.2)(-.2,.2)}
\rput(1,1){\pspolygon(-.2,-.2)(.2,-.2)(.2,.2)(-.2,.2)}
\rput(2,1){\pspolygon(-.2,-.2)(.2,-.2)(.2,.2)(-.2,.2)}
\rput(-1,0){\pspolygon(-.2,-.2)(.2,-.2)(.2,.2)(-.2,.2)}
\rput(-2,-1){\pspolygon(-.2,-.2)(.2,-.2)(.2,.2)(-.2,.2)}
\end{pspicture}
$$
\caption{Successive cuts with halfspaces}\label{success}
\end{figure}
The construction is illustrated by Figure \ref{success}; base
elements of the unit groups have been marked by a circle, Hilbert
basis elements by a square.

We have to prove the claim contained in (D8), and we state it as a
lemma.

\begin{lemma}\label{dualHB}
There exists an $i\ge 1$ such that $B_i=B_{i-1}$, and in this case
$B_i^+=\Hilb(M^+)$, $B_i^-=\Hilb(M^-)$.
\end{lemma}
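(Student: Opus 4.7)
My plan is to prove termination and correctness separately, treating case~(a) explicitly: case~(b) differs only in bookkeeping, since the setup in (D1)--(D5) is arranged so that $\pm h$ together with the chosen basis of $\U(M^+)$ span $\U(M)$, and hence $B_0$ after step (D5) plays the role of a Hilbert basis of $M$ modulo $\U(M^+)$. Throughout the iteration I would maintain the invariant that for every $i\ge 0$, the set $B_i$ together with $\U(M^+)$ generates $M$ as a commutative monoid. This is immediate at $i=0$. Step (D6) only adjoins elements, so it preserves the invariant trivially. Step (D7), applied separately inside $M^+$ and $M^-$, removes only an element $x$ of the form $y+w$ with $y$ a non-unit partner in the same halfspace and some $w\in M^\pm$; by well-founded induction on $\tdeg$ in the finitely generated monoid $M^\pm$, the remainder $w$ is itself expressible from the surviving elements, so removal does not destroy the invariant.

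Assuming termination for the moment, to prove correctness at $B_i=B_{i-1}$ I would show that $B_i^+$ generates $M^+$ modulo $\U(M^+)$. Given $z\in M^+$, the invariant supplies an expression $z=\sum_k y_k+u$ with $y_k\in B_i$ and $u\in\U(M^+)$. Every summand $y_k\in B_i^-$ is balanced (thanks to $\lambda(z)\ge 0$) by some $y_{k'}\in B_i^+$, and the sum $y_k+y_{k'}$ would have been inserted by the next application of (D6). Stability $B_{i+1}=B_i$ together with a terminating auto-reduction chain in $M^\pm$ ensures that this sum is expressible from $B_i^+\cup B_i^-$; iterating the pairing eliminates all $B_i^-$-contributions and writes $z$ as an element of the monoid generated by $B_i^+$ modulo $\U(M^+)$. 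Since $B_i^+$ is auto-reduced in $M^+$ and generates $M^+$ modulo units, it must equal $\Hilb(M^+)$: any extra element would be reducible in $M^+$, and its reducer, expressible in $B_i^+$, contains a summand that actually reduces it in $B_i^+$, contradicting auto-reduction. The same reasoning gives $B_i^-=\Hilb(M^-)$.

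The main obstacle is termination. I would argue that the sequence of submonoids $M_i:=\langle B_i^+\rangle+\U(M^+)\subseteq M^+$ forms an ascending chain: step (D6) augments $B_i^+$ with sums that either enlarge $M_i$ or are eventually reduced, and step (D7) preserves $M_i$ by the invariant argument above. Since $M^+$ is an affine monoid (hence Noetherian when viewed through its monoid algebra), this chain stabilizes at some $M_i=M^+$, at which point every element of $\Hilb(M^+)$ is already expressible in $B_i^+$. One further application of (D6)--(D7) then forces $B_{i+1}^+=\Hilb(M^+)$ by the reducibility argument of the previous paragraph; the symmetric argument handles the negative side. Once both halves reach their Hilbert basis, every sum formed in (D6) is either zero modulo units or reducible by an existing basis element, so auto-reduction absorbs it and $B_{j+1}=B_j$ for some $j\geq i+1$, completing the proof.
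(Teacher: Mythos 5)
Your overall architecture matches the paper's: show that the union of all the $B_i^+$ generates $M^+$ modulo $\U(M^+)$, conclude that the finite Hilbert basis is contained in some $B_{i-1}^+$, and deduce stabilization. But the central step is missing. When you take $z\in M^+$, write it over $B_i$ modulo $\U(M^+)$, pair a negative summand $y_k$ with a positive one $y_{k'}$, and replace $y_k+y_{k'}$ by an expression over $B_i$, that new expression can itself contain negative summands (reduction in (D7) only guarantees $y_k+y_{k'}-y\in M^{\pm}$ for some $y\in B_i^{\pm}$; the remainder must again be expanded over all of $B_i$, not over $B_i^+$ alone). ``Iterating the pairing eliminates all $B_i^-$-contributions'' is therefore exactly the assertion that needs proof, and you give no quantity that decreases under the iteration, so the process could a priori cycle. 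This is where the paper does its real work: it fixes an \emph{optimal} representation, one minimizing $\lambda(u_1+\cdots+u_r)$ over the positive summands, and shows that a surviving negative summand allows a representation with strictly smaller $\lambda$-value of the positive part, using a secondary induction on $\tdeg$ (which in turn needs the observation that the reducer $y$ cannot be $h$, so that $\tdeg$ genuinely drops). Without this, or an equivalent well-founded measure, your correctness argument does not close.

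Your termination argument is also flawed on its own terms. First, the chain $M_i=\langle B_i^+\rangle+\U(M^+)$ need not be ascending: when (D7) discards $x=y+w$ with $y$ surviving, the witness $w$ lies in $M^+$ but not necessarily in $\langle B_{i+1}^+\rangle+\U(M^+)$, which is a proper submonoid of $M^+$ at intermediate stages, so $x$ may be lost from $M_{i+1}$. Second, even for a genuinely ascending chain, Noetherianity of the monoid algebra gives the ascending chain condition for \emph{ideals}, not for finitely generated \emph{submonoids} (equivalently, subalgebras): the submonoids $\langle (1,1),(2,1),\dots,(k,1)\rangle$ of $\ZZ_+^2$ form a strictly increasing infinite chain. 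The correct route, which the paper takes, is to prove first that $B_\infty^+$ generates $M^+$ modulo $\U(M^+)$ and then use only the finiteness of the Hilbert basis together with the fact that irreducible elements survive (D7); termination is a corollary of the generation claim, not a separate Noetherian argument.
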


\begin{proof}
Let $B_\infty=\bigcup_{i=0}^\infty B_i$. We will show that
$B_\infty^+$ generates $M^+$ modulo $\U(M^+)$ and $B_\infty^-$ does
the same for $M^-$. In other words, we claim that for every $x\in
M^+$ there exist $u_1,\dots,u_r\in B_\infty^+$ such that
\begin{equation}\label{geneq}
x-(u_1+\dots+u_r)\in \U(M^+),
\end{equation}
and the corresponding statement holds for $B_\infty^-$ and $M^-$.

Suppose this claim has been proved. Then $B_\infty^+$ contains a
Hilbert basis of $M^+$ since it is a system of generators modulo
$\U(M^+)$. Since the Hilbert basis contains only irreducible
elements (an irreducible element will pass step (D7) above) and is
finite, there must be an $i$ for which $B_{i-1}^+$ contains the
Hilbert basis (in fact, equals it). Then $B_i^+=B_{i-1}^+$.
Increasing $i$ if necessary, we also have $B_i^-=B_{i-1}^-$, and
then $B_i=B_{i-1}$. Conversely, if $B_i=B_{i-1}$, then
$B_{i-1}^+=\Hilb(M^+)$ and $B_{i-1}^-=\Hilb(M^-)$.

We (have to) prove the crucial claim simultaneously for $M^+$ and
$M^-$, considering the more complicated case (b). The proof for case
(a) is obtained if one omits all those arguments that refer to $h$.

We use induction on $\tdeg x$, the total degree with respect to $M$.
We can assume that $x\in M^+$ since the argument for $x\in M^-$ is
analogous. If $\tdeg x=0$, we have $x\in\U(M)$. But then $x-ah\in
\U(M^+)$, since $h$ is a Hilbert basis of $\U(M)^+$ modulo its group
$\U(M^+)$ of invertible elements. Moreover, $h\in B_\infty^+$ by
construction.

Suppose that $\tdeg x>0$, and note that $x$ has a representation
\begin{equation}\label{repr1}
x\equiv (u_1+\dots+u_r)+(v_1+\dots+v_s)+(w_1+\dots+w_t)\mod \U(M^+)
\end{equation}
modulo $\U(M^+)$ in which $u_j, v_k, w_l \in B_\infty $ and
$\lambda(u_j)>0$, $\lambda(v_k)=0$ and $\lambda(w_l)<0$. In fact,
since $B_\infty$ contains a Hilbert basis of $M$, we can find such a
representation modulo $\U(M)$, and adding $h$ or $-h$ sufficiently
often, we end up in $\U(M^+)$.

Among all the representations \eqref{repr1} we choose an
\emph{optimal} one, namely one for which $ \lambda(u_1+\dots+u_r)$
is minimal. If we can show that $t=0$ for this choice, then we are
done. Note that only one of $h$ or $-h$ can appear in an optimal
representation; otherwise canceling $h$ against $-h$ would improve
it.

Clearly, if $t>0$, then $r>0$ as well, since otherwise
$\lambda(x)\ge 0$ is impossible. Consider the representation
\begin{equation}\label{repr2}
x\equiv(u_1+w_1)+(u_2+\dots+u_r)+(v_1+\dots+v_s)+(w_2+\dots+w_t)\mod
\U(M^+)
\end{equation}
modulo $\U(M^+)$.

If $u_1+w_1$ belongs to $B_\infty$ we are done, since
$\lambda(u_1+w_1)<\lambda(u_1)$, regardless of the sign of
$\lambda(u_1+w_1)$.

Otherwise $u_1+w_1$ is reducible in step (D7). Assume $u_1+w_1\in
M^+$ (an analogous argument can be given if $u_1+w_1\in M^-$). Then
there exists $y\in B_\infty^+$ such that $(u_1+w_1)-y\in M^+$. Note
that $y=h$ is impossible: by construction, all elements $z$ of
$B_\infty$ different from $h$ and $-h$ have
$|\lambda(z)|<\lambda(h)$, so
$\lambda(u_1+w_1)<\lambda(u_1)<\lambda(h)$ and $(u_1+w_1)-h\notin
M^+$. But all elements of $B_\infty$ different from $h$ and $-h$ do
not belong to $\U(M)$ and therefore have positive total degree in
$M$. Then $\tdeg(u_1+w_1-y)<\tdeg(u_1+w_1)$. Thus we can apply
induction to $(u_1+w_1)-y$, representing it modulo $\U(M^+)=\U(M^-)$
by elements from $B_\infty^+$. Since $y\in B_\infty^+$, we obtain a
representation for $u_1+w_1$. Substituting this representation into
\eqref{repr2} again yields an improvement. This is a contradiction
to the choice of \eqref{repr1}, and we are done.
\end{proof}

While the description of the algorithm given above is very close to
the implementation in Normaliz, we would like to mention some
further details.

\begin{remark}
(a) It is clear that in the formation of $\tilde B_i$ in step (D6)
one should avoid the sums $x+y$ that have already been formed in an
earlier ``generation''.

(b) When a sum $x+y$ has been formed, it is immediately tested
against reducibility by $B_{i-1}^+$ or $B_{i-1}^-$, respectively.
The elements that survive are collected, and the remaining reduction
steps are applied after this collection.

This ``generation driven'' procedure has the advantage that we can
apply the rather efficient reduction strategy of Section
\ref{reduction}.

(c) Sometimes an element is reduced by one that is created in a
later generation. However, this happens rarely.

(d) When a sum $z=x+y$ is formed and belongs to $\tilde B_i^+$, then
we store $\lambda(x)$ with $z$ (analogously if $x+y\in M^-$).
Suppose that $z$ survives the reduction. Then it is not necessary to
form sums $z+w$ with $\lambda(w)<-\lambda(x)$ since we would have
$x+w,z+w\in M^-$, and $x+w$ clearly reduces $z+w$. This trick
diminishes the number of sums to be formed in higher generations
considerably, but does not help in the formation of $B_1$, which is
usually (but not always) the most time consuming generation.

(e) Normaliz uses a heuristic rule to determine the order in which
the hyperplanes are inserted into the algorithm. It is evidently
favorable to keep the sets $B_i$ small as long as possible.

(f) In a future version of Normaliz we will also try a hybrid
approach in which the algorithm for the local Hilbert bases is
chosen dynamically.
\end{remark}

We conclude the article by a comparison of the primal algorithm of
Normaliz and the algorithm of this section.

\begin{remark}
(a) Normaliz has two input modes in which the cone $C$ is specified
by inequalities (and equations). In these cases the user can choose
whether to apply the primal algorithm (first computing a system of
generators of $C$) or the dual algorithm described in this section.
It is not easy to decide which of the two algorithms will perform
better for a given $C$. The bottleneck of the primal algorithm is
certainly the computation of a full triangulation (if it is done).
The size of the triangulation is mainly determined by the number of
support hyperplanes of the subcones of $C$ through which the
computation passes. However, if it can be found, the (partial)
triangulation itself carries a large amount of information, and the
subsequent steps profit from it.

We illustrate these performance of the primal algorithm by two
examples, one for which it is very fast, and another one that it
cannot solve.

(i) The example \texttt{small} from the Normaliz distribution is
defined by a $5$-dimensional lattice polytope with $190$ vertices,
$32$ support hyperplanes, and $34,591$ lattice points. Its
normalized volume is $2,276,921$, and the triangulation contains
$1593$ simplicial cones (if computed in the mode ``normal''). About
$230,000$ vectors survive the local reduction, and are sent into
global reduction, leading to the Hilbert basis with $34,591$
vectors. (The number of candidates for global reduction caries
considerably with the triangulation; those derived from shellings
seem to behave worse in this respect.) Run time with Normaliz 2.2
(the currently public version) on our SUN Fire X4450 is $8$ sec
(and 19 sec if the $h$-vector is computed).

(ii) The example \texttt{5x5} from the Normaliz distribution
describes the cone of $5\times5$ ``magic squares'' \cite{ADH}, i.e.,
$5\times5$ matrices with nonnegative entries and constant row,
column and diagonal sums. The cone of dimension $15$ has $1940$
extreme rays and $25$ support hyperplanes. The subcone generated by
the first $57$ extreme rays (in the order Normaliz finds them) has
already $30,290$ support hyperplanes. After $104$ extreme rays we
reached $56,347$ support hyperplanes (and we stopped the program).

(b) The main obstruction in the application of the dual algorithm is
the potentially extremely large number of vectors it has to
generate. Even if the Hilbert basis of the final cone $C$ is small,
the Hilbert bases of the overcones of $C$ through which the
algorithm passes may be extremely large, or one has to compute with
medium size Hilbert bases in many successive overcones. Some data on
the behavior of the algorithm on the two examples from (a)---now (i)
is hard, and (ii) is easy:

(i) For the lattice polytope the dual algorithm (staring from the
support hyperplanes) needs $3540$ sec. One of the intermediate
Hilbert bases has cardinality $145,098$. Therefore the number of
elements of $\tilde B_1$ at the insertion of the next hyperplane can
safely be estimated by $10^9$.

(ii) After $20$ hyperplanes have been inserted, the size of the
Hilbert basis of the cone reached is $228$, and the values for the
subsequent cones are $979$, $1836$, $2810$, $3247$, and finally
$4828$. Computation time is 2 sec.

(c) If the Hilbert basis of $C=\RR_+x_1+\dots+x_n$ is to be
computed, the primal algorithm builds an ascending chain
$$
0=C_0\subset C_1\subset\dots\subset C_n=C,
$$
with a corresponding descreasing chain
$$
(\RR^d)^*=C_0^*\supset C_1^*\supset\dots\supset C_n^*=C^*.
$$
For the cone $D=\bigcap_{j=1}^s H_{\lambda_i}^+$ the dual algorithm
proceeds in exactly the opposite way, building an increasing chain
$$
0=D_0^*\subset D_1^*\subset\dots\subset D_s^*=D^*,
$$
of dual cones, with a corresponding decreasing chain
$$
\RR^d=D_0\supset D_1\supset\dots\supset D_s=D.
$$
In both cases, the complexity is determined by the
\emph{decreasing} chains of overcones of $C^*$ and $D$,
respectively. These overcones are hard to control only by the
internal data of $C$ or $D^*$.

Of course, if $n=1940$ and $s=25$ as in example (ii), then the
choice is easy, but example (i) with $n=190$ and $s=32$
illustrates that the sole comparison of $n$ and $s$ does in
general not help to pick the better algorithm.
\end{remark}

We conclude by presenting the following table which contains
experimental test data we have obtained for computing the
Hilbert basis with Normaliz version 2.2, as well as data
obtained from our tests with 4ti2 version 1.3.2.  The system
4ti2 \cite{He}, \cite{4ti2} contains a somewhat different
implementation of the dual algorithm. The table shows that our
version is certainly comparable in performance.\smallskip
\begin{center}
\begin{tabular}{|r|r|r|r|r|r|r|r|r|}
\hline
 name & input&dim&\#gen&\#supp&\#HB&$t$ primal&$t$ dual&$t$ 4ti2 \\
\hline
cut.in & supp & 5 & 83 & 25 & 4,398 & 0.15 &  3.4 & 620 \\
\hline
small.in & gen & 6 & 190 & 32 & 34,591 & 8 &  3,540 & 3,230 \\
\hline
medium.in & gen & 17 & 40 & 3,753 & 217 & 11 &  $\infty$ & $\infty$ \\
\hline
4x4.in & equ & 8 & 20 & 34 & 20 & 0.002 & 0.001 & 0.01 \\
\hline
5x5.in & equ & 15 & 1,940 & 47 & 4,828 & $\infty$ & 2 & 2.4 \\
\hline
6x6.in & equ & 24 & 97,548 & 62 & 522,347 & $\infty$ & 87,600 & 345,600 \\
\hline
\end{tabular}
\end{center}\smallskip

The first column refers to the name of the input file in the
Normaliz distribution. The second column describes the type of
input, namely \textit{gen}erators, \textit{supp}ort hyperplanes, or
system of \textit{equ}ations. In the latter case the cone is the
intersection of the solution space with the nonnegative orthant. The
third column contains the dimension of the cone, and the following
three list the number of its generators, support hyperplanes and
Hilbert basis elements. The last three columns contain computation
times for Normaliz primal, Normaliz dual and 4ti2 (measured in
seconds). For the application of Normaliz dual or 4ti2 to input of
type ``gen'', we first computed the dual cone separately (or
extracted it from the output of the primal algorithm). Normaliz
primal, when applied to any type of input, does the necessary
dualization itself.

We thank Christof Söger for measuring the computation times.

\end{document}